\keywords{twisted group algebra, twist, Clifford algebra, geometric algebra}
\newtheorem{theorem}{Theorem}[section]
\newtheorem{lemma}[theorem]{Lemma}
\newtheorem{corollary}[theorem]{Corollary}
\numberwithin{equation}{section}
\DeclareMathOperator{\clf}{\phi}
\newcommand{\sob}[1]{\beta(#1)}
\newcommand{\sbf}[1]{(-1)^{\beta(#1)}}
\begin{document}

\title{The Clifford Twist}

\author{John W. Bales}
\date{}
\address{Department of Mathematics\\
         Tuskegee University\\
	 Tuskegee, AL 36088\\
	 USA}
\email{jbales@mytu.tuskegee.edu}

\subjclass[2000]{16S99,16W99}

\begin{abstract}
  This is an elementary exposition of the twisted group algebra representation of simple Clifford algebras.
\end{abstract}
\maketitle

\section{Clifford Algebra}

Clifford Algebra is an algebra defined on a potentially infinite set $e_1,e_2,e_3,\cdots$ of linearly independent unit vectors, their finite
products (called multi-vectors) and the unit scalar 1 (denoted $e_0$). Every element of the algebra is a linear combination of these basis
elements over some ring, usually the real numbers.

The vectors $e_1,e_2,e_3,\cdots$ are referred to as `1-blades.' A product of two vectors is called a `2-blade.' three vectors a `3-blade' and so
forth. The scalar $e_0$ is a `0-blade.' An $n$-blade multi-vector is said to be of grade $n.$

There are four fundamental multiplication properties of 1-blades.
\begin{enumerate}
\item The square of 1-blades is $\mu$ (where $\mu^2=1$). 
\item The product of 1-blades is anti-commutative.
\item The product of 1-blades is associative.
\item Every $n$-blade can be factored into the product of $n$ distinct 1-blades.
\end{enumerate}

The product of $e_i$ and $e_j$ is denoted $e_{ij}$ if $i<j$ and by $-e_{ij}$ if $i>j.$ Likewise for higher order blades. For example, if
$i<j<k$ then $e_ie_je_k=e_{ijk}.$ 

Any two $n$-blades may be multiplied by first factoring them into 1-blades. For example, the product of $e_{134}$ and $e_{23},$ is computed as follows:
\begin{align*} e_{134}e_{23}&=e_1e_3e_4e_2e_3\\
                            &=-e_1e_4e_3e_2e_3\\
			    &=e_1e_4e_2e_3e_3\\
			    &=\mu e_1e_4e_2\\
			    &=-\mu e_1e_2e_4\\
			    &=-\mu e_{124}
\end{align*}

\section{Representing Clifford Algebra as a twisted group algebra}

Each of the basis elements of Clifford algebra $1,e_1, e_2,e_{12},e_3,\cdots$ can be associated with an element of the set $G$ of
non-negative integers.

Each vector $e_k$ is associated with the integer $2^{k-1}$ and the scalar $e_0$ is associated with 0. A multi-vector is associated with
the sum of the integers associated with its vector factors. Thus, for example, the multi-vector $e_{134}$ is associated with the sum $2^0+2^2+2^3=13.$
Notice that the binary representation of 13 is 1101 with bits 1, 3 and 4 set. We will represent the sequence $1,e_1, e_2,e_{12},e_3,\cdots$ by the
sequence $i_0,i_1,i_2,i_3,i_4,\cdots$ where the subscript of $i$ is the number associated
with the corresponding vector or multi-vector.

Notice that, since the square of a vector $e_k$ is $\mu$ which is either 1 or $-1$, the product of two basis elements $i_p$ and $i_q$ will always be
either $i_r$ or $-i_r$ where $r$ is the XOR (exclusive or) of the binary representations of integers $p$ and $q$. The set $G$ of
non-negative integers is a group under XOR. For brevity, we will denote the operation $p$ XOR $q$ by simple concatenation $pq.$ Thus there is a
function $\clf$ mapping $G\times G$ into $\{-1,1\}$ such that if $p,q\in G$ then

\begin{equation}
  i_pi_q=\clf(p,q)i_{pq}
\end{equation}

thereby representing Clifford algebra as a twisted group algebra.\\

Let $2p$ denote the double of $p.$ Notice that the vector factors of $i_{2p}$ are the \emph{successors} of the vector factors of $i_p$ in the sense
that $e_k$ is a vector factor of $i_{2p}$ if and only if $e_{k-1}$ is a vector factor of $i_p.$ For example, $i_{13}=e_{134}$ and $i_{26}=e_{245}.$
This is more intuitive if the subscripts are represented in binary. $13=1101_B$ with bits 1,3 and 4 set, and $2(13)=26=11010_B$ with bits 2,4 and 5
set. Multiplying by 2 in binary shifts bits to the left and appends a 0 on the right.

The next two lemmas are then immediately obvious.

\begin{lemma}
  \item $e_1i_{2p}=i_{2p+1}$
\end{lemma}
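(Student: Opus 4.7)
The plan is to read the binary encoding of $2p$ carefully and observe that $e_1$ is not one of its vector factors, so that multiplying by $e_1$ on the left requires no sign corrections at all. From the discussion preceding the lemma, $e_k$ is a factor of $i_{2p}$ iff $e_{k-1}$ is a factor of $i_p$; since there is no vector indexed by $0$, every factor of $i_{2p}$ is of the form $e_k$ with $k\geq 2$. Equivalently, the lowest bit of $2p$ is always $0$.

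Next I would write $i_{2p}=e_{k_1}e_{k_2}\cdots e_{k_n}$ in canonical form with $2\leq k_1<k_2<\cdots<k_n$, and compute
\[
  e_1 i_{2p}=e_1 e_{k_1}e_{k_2}\cdots e_{k_n}.
\]
Since $1<k_j$ for every $j$, this product is already in strictly ascending index order, so the canonical-form convention gives $e_1 i_{2p}=e_{1k_1k_2\cdots k_n}$ directly, with no anti-commutation transpositions and no $\mu$'s introduced. The integer associated with this multi-vector is $2^0+2^{k_1-1}+\cdots+2^{k_n-1}=1+2p=2p+1$, so $e_1 i_{2p}=i_{2p+1}$ and incidentally $\clf(1,2p)=+1$.

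There is essentially no obstacle; the only thing worth checking is the edge case $p=0$, where $i_{2p}=i_0=e_0=1$ and the identity reduces to $e_1=i_1$, which is consistent. The content of the lemma is precisely the observation that the doubling operation $p\mapsto 2p$ clears bit $0$, leaving room to prepend $e_1$ for free.
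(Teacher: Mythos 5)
Your proof is correct and follows exactly the reasoning the paper sketches: the vector factors of $i_{2p}$ are all of the form $e_k$ with $k\ge 2$, so prepending $e_1$ requires no transpositions and merely sets bit $0$, giving $i_{2p+1}$. The paper states the lemma is ``immediately obvious'' from this observation; you have simply written out the details (including the harmless $p=0$ case), which is fine.
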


\begin{lemma}
  \item $e_1i_{2p+1}=\mu i_{2p}$
\end{lemma}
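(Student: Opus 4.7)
The plan is to chain the previous lemma with the defining property $e_1^2 = \mu$ of 1-blades. Since the previous lemma tells us that $i_{2p+1} = e_1\, i_{2p}$, I would simply substitute this into the left-hand side of the current claim to obtain
\[
  e_1\, i_{2p+1} \;=\; e_1 \left( e_1\, i_{2p} \right).
\]
Then I would invoke the associativity of multiplication (fundamental property 3 of 1-blades) to re-bracket this as $(e_1 e_1)\, i_{2p}$, and apply the squaring rule (fundamental property 1) to replace $e_1 e_1$ with $\mu$, yielding $\mu\, i_{2p}$, which is exactly what we want.

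The only point that is worth articulating is why the previous lemma applies cleanly, namely that $2p$ is even so bit $0$ is unset in its binary expansion, meaning $e_1$ does not already appear among the vector factors of $i_{2p}$. This is what makes the substitution legitimate rather than requiring any sign tracking. There is no genuine obstacle here: once the previous lemma is in hand, the argument is a two-line calculation using only associativity and $e_1^2 = \mu$.
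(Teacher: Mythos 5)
Your argument is correct and matches the reasoning the paper implicitly relies on when it calls the lemma ``immediately obvious'': you chain Lemma 2.1, associativity, and $e_1^2=\mu$. The paper offers no written proof, so your two-line calculation is exactly the right fill.
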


Let $\sob{p}$ denote the sum of the bits of $p.$ Then $\beta(p)$ is the grade of $i_p.$ The remaining lemmas follow from the fact that $i_{2p}$
contains exactly $\sob{p}$ vector factors and
$e_1$ must be `commuted' with each of them to `find its place' so to speak.

\begin{lemma}
  \item $i_{2p}e_1=\sbf{p}i_{2p+1}$
\end{lemma}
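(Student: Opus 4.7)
The plan is to push $e_1$ from the right end of $i_{2p}$ all the way to the left, count the sign picked up by anti-commutativity, and then invoke Lemma 2.1 to finish.

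First I would factor $i_{2p}$ into its 1-blade factors. Writing $i_{2p}=e_{k_1}e_{k_2}\cdots e_{k_n}$ with $k_1<k_2<\cdots<k_n$, the number of factors is $n=\beta(2p)=\beta(p)$, since doubling $p$ in binary merely shifts its set bits one place to the left without changing how many there are. Crucially, the least significant bit of $2p$ is $0$, so none of the $k_j$ equals $1$; in particular $k_j\ge 2$ for every $j$.

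Next I would slide $e_1$ to the left. Because each $k_j\ne 1$, property (2) of 1-blades gives $e_{k_j}e_1=-e_1e_{k_j}$. Applying this $n$ times in succession,
\begin{equation*}
i_{2p}e_1 = e_{k_1}\cdots e_{k_n}e_1 = (-1)^{n}\,e_1 e_{k_1}\cdots e_{k_n} = (-1)^{\beta(p)}e_1 i_{2p}.
\end{equation*}
Finally, Lemma 2.1 rewrites $e_1 i_{2p}$ as $i_{2p+1}$, yielding the claim.

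The entire argument is a straightforward sign count, so there is no real obstacle; the only point that warrants care is the observation that $2p$ has no $e_1$ factor, which guarantees that every commutation genuinely contributes a $-1$ rather than interacting with $e_1$ via property (1).
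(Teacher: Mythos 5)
Your proof is correct and is exactly the argument the paper sketches: $i_{2p}$ has $\beta(p)$ one-blade factors all distinct from $e_1$ (since the low bit of $2p$ is zero), so anticommuting $e_1$ past each contributes $(-1)^{\beta(p)}$, and Lemma 2.1 then identifies $e_1 i_{2p}$ with $i_{2p+1}$. You have merely filled in the details that the paper leaves as ``immediately obvious.''
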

\begin{lemma}
  \item $i_{2p+1}e_1=\sbf{p}\mu i_{2p}$
\end{lemma}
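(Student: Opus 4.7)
The plan is to derive the identity from the three preceding lemmas by a short algebraic rewrite, with no genuine case analysis required. The starting observation is that $i_{2p+1}$ has its bit $0$ set while its remaining set bits coincide with those of $2p$; since all other vector factors of $i_{2p+1}$ have index at least $2$, the ordering convention gives the literal equality $i_{2p+1}=e_1 i_{2p}$. This is exactly Lemma 1 read right-to-left.

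Using associativity, I would then write
\[
i_{2p+1}e_1 \;=\; (e_1 i_{2p})\,e_1 \;=\; e_1\,(i_{2p}e_1).
\]
Lemma 3 collapses the inner product to $\sbf{p}i_{2p+1}$, and pulling the scalar sign outside yields $\sbf{p}\,e_1 i_{2p+1}$. A final application of Lemma 2 rewrites $e_1 i_{2p+1}$ as $\mu i_{2p}$, giving $\sbf{p}\mu i_{2p}$ and closing the argument.

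As a sanity check, one can instead read the result directly from the commutation count that motivated Lemma 3: $i_{2p+1}$ consists of $e_1$ followed by $\sob{p}$ vectors of index at least $2$; multiplying on the right by $e_1$ and commuting that copy leftward past those $\sob{p}$ factors contributes a sign of $\sbf{p}$ by anticommutativity of $1$-blades, the two copies of $e_1$ then collide and produce $\mu$, and the remaining product of successor vectors is precisely $i_{2p}$. Both routes land on the same expression.

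There is essentially no obstacle here; this is the fourth in a routine sequence of sign-bookkeeping identities. The only point worth being explicit about is that $i_{2p+1}=e_1 i_{2p}$ holds on the nose, not just up to a sign, which is why the convention of writing vectors in increasing index order inside $i_p$ is being used at precisely this step.
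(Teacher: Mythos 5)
Your proposal is correct, and your primary route is genuinely different from the paper's. The paper gives no per-lemma proofs; it simply remarks that the last two lemmas ``follow from the fact that $i_{2p}$ contains exactly $\sob{p}$ vector factors and $e_1$ must be commuted with each of them to find its place,'' which is precisely the direct commutation-count you offer only as a sanity check. Your main argument instead bootstraps Lemma 4 from Lemmas 1--3 via associativity:
\[
i_{2p+1}e_1 = (e_1 i_{2p})e_1 = e_1(i_{2p}e_1) = \sbf{p}\,e_1 i_{2p+1} = \sbf{p}\mu\, i_{2p}.
\]
This is a clean and valid derivation, and it has the pedagogical merit of exposing that the four lemmas are not logically independent: once the first three are in hand, the fourth is a formal consequence needing no further inspection of vector factors. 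The trade-off is that it inherits the burden of proof from Lemma 3, which still requires the commutation count the paper alludes to; your sanity-check paragraph supplies exactly that, so both readings are covered. One thing you did well that the paper leaves implicit: you flagged that $i_{2p+1}=e_1 i_{2p}$ holds on the nose (not up to sign) because $e_1$ is already in leading position under the increasing-index convention, which is the small but essential fact that makes the first step exact.
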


\begin{theorem}\label{T:sign2}
There is a twist $\clf(p,q)$ mapping $G\times G$ into
$\{-1,1\}$ such that if $p,\,q\in G,$ then
$i_pi_q=\clf(p,q)i_{pq}.$
\end{theorem}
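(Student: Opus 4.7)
The strategy is strong induction on $p+q$, treating Lemmas~1--4 as the engine for extracting and reabsorbing a single $e_1$. The base case $p=q=0$ is immediate, since $i_0=1$ acts as the multiplicative identity and so $\clf(0,0)=1$. For the inductive step I would split into cases by the parities of $p$ and $q$, since the lemmas are phrased in terms of $i_{2p}$ versus $i_{2p+1}$.

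Suppose first that $p$ is odd and write $p=2p'+1$. Lemma~1 gives $i_p=e_1\,i_{2p'}$, so $i_p i_q=e_1\bigl(i_{2p'}i_q\bigr)$. Since $2p'+q<p+q$, the inductive hypothesis supplies an $\varepsilon\in\{-1,1\}$ with $i_{2p'}i_q=\varepsilon\, i_{(2p')\oplus q}$. Now Lemmas~1 and 2 reduce $e_1\,i_{(2p')\oplus q}$ to $\pm i_{1\oplus(2p')\oplus q}=\pm i_{p\oplus q}$, the two sub-cases corresponding to whether $(2p')\oplus q$ is even or odd; the sign $\pm\mu$ that Lemma~2 produces is harmless since $\mu\in\{-1,1\}$. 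When $p$ is even and $q$ is odd the argument is symmetric, using Lemmas~3--4 to absorb $e_1$ from the right rather than the left.

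The only remaining case is $p,q$ both even: write $p=2p'$ and $q=2q'$. Here Lemmas~1--4 are not directly applicable because neither $i_p$ nor $i_q$ has $e_1$ as a factor. The fix is the shift $\sigma:e_k\mapsto e_{k+1}$, which extends to an algebra embedding sending $i_{p'}$ to $i_{2p'}$. Thus $i_{2p'}i_{2q'}=\sigma\bigl(i_{p'}i_{q'}\bigr)$, and since $p'+q'<p+q$ the inductive hypothesis supplies $i_{p'}i_{q'}=\pm\,i_{p'\oplus q'}$, which $\sigma$ carries to $\pm\,i_{2(p'\oplus q')}=\pm\,i_{p\oplus q}$.

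The main obstacle is precisely this even/even case: one must justify that $\sigma$ is a well-defined algebra map and that it respects the indexing $p'\mapsto 2p'$. This follows from the defining properties of Clifford algebra (anticommutativity, $e_k^2=\mu$, associativity), but it is the one step not handed to us by the preceding lemmas. An alternative route, should one wish to avoid $\sigma$ entirely, is to redo the induction on the largest bit position appearing in $p$ or $q$, factoring out the highest-index vector in direct analogy with Lemmas~1--4. Either way, once existence of the sign is established the function $\clf$ can simply be read off the construction.
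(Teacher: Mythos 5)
Your proof is essentially the same as the paper's: a parity case analysis driven by Lemmas~1--4, with a shift $e_k\mapsto e_{k+1}$ handling the even--even case (the paper uses this shift silently in the step $i_{2u}i_{2v}=\clf(u,v)i_{2uv}$ of its case~1; you make it explicit and correctly flag it as the one point the lemmas do not cover). The only cosmetic difference is that you run strong induction on $p+q$ while the paper inducts on the number of bits, i.e.\ on $n$ with $p,q\in G_n$.
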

\begin{proof}
Let $G_n=\{p\:|\:0\le p < 2^n\}$ with group operation ``bit-wise exclusive or.''

To begin with, $i_0i_0=\clf(0,0)i_0=1$ provided $\clf(0,0)=1.$

This defines the twist for $G_0.$

If $p$ and $q$ are in $G_{n+1},$ then there are elements $u$ and $v$ in $G_n$ such that one of the following is true:
\begin{enumerate}
 \item $p=2u$ and $q=2v$
 \item $p=2u$ and $q=2v+1$
 \item $p=2u+1$ and $q=2v$
 \item $p=2u+1$ and $q=2v+1$
\end{enumerate}
Assume $\clf$ is defined for $u,v\in G_n,$ then consider these four cases in order.

\begin{enumerate}
 \item $p=2u$ and $q=2v$\\
       \begin{align*}i_pi_q &= i_{2u}i_{2v}\\
                            &= \clf(u,v)i_{2uv}\\
                            &= \clf(2u,2v)i_{(2u)(2v)}\\
			    &= \clf(p,q)i_{pq}
       \end{align*} provided $\clf(2u,2v)=\clf(u,v).$
 \item $p=2u$ and $q=2v+1$
       \begin{align*}i_pi_q & =i_{2u}i_{2v+1}\\
                            & =i_{2u}e_1i_{2v}\\
                            & =\sbf{u}e_1i_{2u}i_{2v}\\
                            & =\sbf{u}e_1\clf(2u,2v)i_{2uv}\\
                            & =\sbf{u}\clf(u,v)i_{2uv+1}\\
 			    & =\clf(2u,2v+1)i_{2uv+1}\\
			    & =\clf(p,q)i_{pq}
       \end{align*} provided $\clf(2u,2v+1)=\sbf{u}\clf(u,v).$
 \item $p=2u+1$ and $q=2v$
       \begin{align*} i_pi_q &=i_{2u+1}i_{2v}\\
                             &=e_1i_{2u}i_{2v}\\
			     &=e_1\clf(u,v)i_{2uv}\\
			     &=\clf(u,v)i_{2uv+1}\\
			     &=\clf(2u+1,v)i_{2uv+1}\\
			     &=\clf(p,q)i_{pq}      
       \end{align*} provided $\clf(2u+1,2v)=\clf(u,v).$
 \item $p=2u+1$ and $q=2v+1$
       \begin{align*} i_pi_q &=i_{2u+1}i_{2v+1}\\
                             &=e_1i_{2u}e_1i_{2v}\\
                             &=\sbf{u}e_1e_1i_{2u}i_{2v}\\
			     &=\sbf{u}\mu\clf(u,v)i_{2uv}\\
			     &=\clf(2u+1,2v+1)i_{2uv}\\
			     &=\clf(p,q)i_{pq}
       \end{align*} provided $\clf(2u+1,2v+1)=\sbf{u}\mu\clf(u,v).$
\end{enumerate}
\end{proof}

\begin{corollary}
 Assume $p,q\in G_n.$ The Clifford algebra twist can be defined recursively as follows:
 \begin{enumerate}
  \item $\clf(0,0)=1$
  \item $\clf(2p,2q)=\clf(2p+1,2q)=\clf(p,q)$
  \item $\clf(2p,2q+1)=\sbf{p}\clf(p,q)$
  \item $\clf(2p+1,2q+1)=\sbf{p}\mu\clf(p,q)$
 \end{enumerate}
\end{corollary}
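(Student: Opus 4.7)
The plan is to observe that the four recursive identities of this corollary are exactly the ``provided'' side-conditions extracted at the end of each of the four cases in the proof of Theorem \ref{T:sign2}. So the corollary really is a repackaging of that induction rather than a fresh argument, and my proof would amount to rerunning that induction on $n$ (where $\clf$ is being extended from $G_n$ to $G_{n+1}$) and reading off the four identities as they fall out.

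The argument would proceed as follows. For the base, $i_0 i_0 = 1 \cdot 1 = i_0$ forces $\clf(0,0) = 1$, giving identity (1). For the inductive step I would take $p = 2u + a$ and $q = 2v + b$ with $u,v \in G_n$ and $a,b \in \{0,1\}$, and in each of the four combinations rewrite $i_p i_q$ using the preceding lemmas for $e_1 i_{2u}$, $i_{2u} e_1$, $e_1 i_{2u+1}$, $i_{2u+1} e_1$, together with the case $(a,b)=(0,0)$ product $i_{2u} i_{2v} = \clf(u,v) i_{2uv}$. Comparing the resulting coefficient to the defining equation $i_p i_q = \clf(p,q) i_{pq}$ then reads off the value of $\clf(2u+a,2v+b)$ in terms of $\clf(u,v)$, producing identities (2), (3), and (4); the signs $\sbf{u}$ arise from commuting $e_1$ past the $\sob{u}$ vector factors of $i_{2u}$, and the factor $\mu$ in (4) comes from $e_1 e_1 = \mu$.

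There is no real obstacle here, since all of the algebra has already been carried out in the proof of Theorem \ref{T:sign2}; what remains is only bookkeeping. The one point to be careful about is that the corollary bundles $\clf(2p,2q) = \clf(2p+1,2q) = \clf(p,q)$ into a single identity (2), which corresponds to cases (1) and (3) of the Theorem's proof, whereas the corollary's (3) lines up with the Theorem's case (2) and the corollary's (4) lines up with the Theorem's case (4). Once that relabeling is recorded, nothing further needs to be verified.
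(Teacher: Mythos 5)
Your proposal is correct and follows the paper's approach exactly: the paper gives no separate proof for this corollary because each of the four recursive identities is precisely the ``provided'' condition extracted at the end of the corresponding case in the proof of Theorem \ref{T:sign2}, with the base case supplying $\clf(0,0)=1$. Your bookkeeping of which corollary item corresponds to which case of the theorem (item (2) merging cases (1) and (3), item (3) being case (2), item (4) being case (4)) is accurate, so nothing is missing.
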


 Stated another way
 
 \begin{equation}
   \begin{bmatrix} \clf(2p,2q)   & \clf(2p,2q+1)\\ \clf(2p+1,2q) & \clf(2p+1,2q+1) \end{bmatrix}
   = \clf(p,q)\begin{bmatrix} 1 & \sbf{p}\\ 1 & \sbf{p}\mu \end{bmatrix}
 \end{equation}

\section{Recursive generation of twist matrices for higher dimensions}

 The twist matrix for dimension one is found when $p=q=0$
  
 \[\left[ \begin{array}{rr}
          1 & 1 \\1 & \mu
  \end{array}\right]\]

For two dimensions, $0\le p\le1,0\le q\le 1,$ the twist matrix is

\[\left[ \begin{array}{rrrr}
            1 &    1 &   1 &   1\\
            1 &  \mu &   1 &  \mu\\
            1 &   -1 & \mu & -\mu\\
            1 & -\mu & \mu &   -1
         \end{array}\right]\]

 For $\mu=-1$ these coincide with the twist tables for complex numbers and quaternions. For dimension 3, however, we do not get the twist table for
the octonions, rather

 \[\left[ \begin{array}{rrrrrrrr}
            1 &    1 &   1 &   1 &   1 &    1&    1&    1\\
            1 &  \mu &   1 &  \mu & 1 &  \mu &   1 &  \mu\\
            1 &   -1 & \mu & -\mu & 1 &   -1 & \mu & -\mu \\
            1 & -\mu & \mu &   -1 & 1 & -\mu & \mu &   -1 \\
            1 & -1 & -1 & 1 & \mu & -\mu & -\mu & \mu \\
            1 & -\mu & -1 & \mu & \mu & -1 & -\mu & 1\\
            1 & 1 & -\mu & -\mu & \mu & \mu & -1 & -1\\
            1 & \mu & -\mu & -1 & \mu & 1 & -1 & -\mu 
         \end{array}\right]\]

For dimension four the twist matrix is too large to represent in this form, so we make the following substitutions:

\begin{equation}
  A = \left[\begin{array}{rr}1 & 1 \\1 & \mu \end{array}\right]
\end{equation}

\begin{equation}
  B = \left[\begin{array}{rr}1 & -1 \\1 & -\mu\end{array}\right]
\end{equation}

The matrices $A$ and $B$ are simply the values of $M(p)=\begin{bmatrix} 1 & \sbf{p}\\ 1 & \sbf{p}\mu \end{bmatrix}$ when $\sbf{p}$ is positive and
negative, respectively.

Then the dimension 4 twist table can be represented as follows.

 \[\left[ \begin{array}{rrrrrrrr}
            A &    A &   A &   A &   A &    A &    A & A \\
            B &  \mu B &   B &  \mu B & B &  \mu B &   B &  \mu B\\
            B &   -B & \mu B & -\mu B & B &   -B & \mu B & -\mu B \\
            A & -\mu A & \mu A &   -A & A & -\mu A & \mu A &   -A \\
            B & -B & -B & B & \mu B & -\mu B & -\mu B & \mu B \\
            A & -\mu A & -A & \mu A & \mu A & -A & -\mu A & A\\
             A & A & -\mu A & -\mu A & \mu A & \mu A & -A & -A\\
             B & \mu B & -\mu B & -B & \mu B & B & -B & -\mu B
          \end{array}\right]\]

The twist tables for the various dimensions can be generated recursively beginning with $A$ for dimension 1, then making the following replacements to
generate the twist table for each successively higher dimension:

\begin{equation}
  A \Longrightarrow \left[\begin{array}{rr}A & A \\B & \mu B\end{array}\right]
\end{equation}

\begin{equation}
  B \Longrightarrow \left[\begin{array}{rr}B & -B \\A & -\mu A\end{array}\right]
\end{equation}

\section{A tree for computing the Clifford twist}

In \cite{B2009} a tree for computing the Cayley-Dickson twist is described. The same procedure applies to the Clifford twist.

The tree consists of only four components which repeat indefinitely, beginning at node $A$. There are two versions, one for each value of $\mu.$

\begin{figure}[ht]
\Tree [.A [. A A ] [. B B ] ]
\Tree [.--A [. --A --A ] [--B --B ] ]
\Tree [.B [. B --B ] [. A --A ] ]
\Tree [.--B [. --B B ] [. --A A ] ]
  \caption{Clifford twist tree for $\mu=1.$}
\end{figure}

\begin{figure}[ht]
\Tree [.A [. A A ] [. B --B ] ]
\Tree [.--A [. --A --A ] [--B B ] ]
\Tree [.B [. B --B ] [. A A ] ]
\Tree [.--B [. --B B ] [. --A --A ] ]
  \caption{Clifford twist tree for $\mu=-1.$}
\end{figure}

Let us illustrate the use of the tree to compute the product $i_{2636}i_{1143}$ given $\mu=-1.$

\begin{enumerate}
  \item Convert the subscripts to binary notation. $2636=101001001100_B$ and $1143=10001110111_B.$
  \item Pair the bits of the first subscript with the bits of the second by placing one over the other. Pad the smaller with zero bits if necessary.
\\
     $\xrightarrow[0]{1},
     \xrightarrow[1]{0},
     \xrightarrow[0]{1},
     \xrightarrow[0]{0},
     \xrightarrow[0]{0},
     \xrightarrow[1]{1},
     \xrightarrow[1]{0},
     \xrightarrow[1]{0},
     \xrightarrow[0]{1},
     \xrightarrow[1]{1},
     \xrightarrow[1]{0},
     \xrightarrow[1]{0}$  

  \item Each binary pair is an instruction for traversing one of the four tree components. A zero is an instruction to move down a left branch and a
one is an instruction to move down a right branch. The result is the following path.\\

\begin{eqnarray*}
   A &\xrightarrow[0]{1}&B\\
     &\xrightarrow[1]{0}&-B\\
     &\xrightarrow[0]{1}&-A\\
     &\xrightarrow[0]{0}&-A\\
     &\xrightarrow[0]{0}&-A\\
     &\xrightarrow[1]{1}&B\\
     &\xrightarrow[1]{0}&-B\\
     &\xrightarrow[1]{0}&B\\
     &\xrightarrow[0]{1}&A\\
     &\xrightarrow[1]{1}&-B\\
     &\xrightarrow[1]{0}&B\\
     &\xrightarrow[1]{0}&-B   
\end{eqnarray*}
  
  Since the result is $-B$, $\clf(2636,1143)=-1.$ Whenever the result is $-A$ or $-B$, $\clf =-1$ and whenever the result is $A$ or $B$, $\clf=+1.$
Since $101001001100 \text{\ XOR\ } 010001110111 = 111000111011 =  3643$ the result is
\begin{equation*}
  i_{2636}\cdot i_{1143} = -i_{3643}
\end{equation*}
or
\begin{equation*}
  e_{347ac}\cdot e_{123567b}=-e_{12456abc}
\end{equation*}

\end{enumerate}

\end{document}